\documentclass[11pt,twoside]{article}
\usepackage[utf8]{inputenc}
\usepackage[english]{babel}
\usepackage{amsmath,amssymb,amsthm}
\usepackage{hyperref}

\usepackage[margin=1in]{geometry}
\newtheorem{theorem}{Theorem}[section]
\newtheorem{proposition}[theorem]{Proposition}
\newtheorem{lemma}[theorem]{Lemma}
\newtheorem{corollary}[theorem]{Corollary}

\theoremstyle{definition}
\newtheorem{definition}[theorem]{Definition}
\newtheorem{remark}[theorem]{Remark}

\numberwithin{equation}{section}

\begin{document}
	
	\title{\textbf{Non-ordinary primes and congruences for weakly holomorphic modular forms of level 2}}
	\author{\textbf{Nechaev Vladimir}}
	\maketitle
	
	\begin{abstract}
		\noindent In this paper, we investigate the arithmetic properties of the Fourier coefficients of weakly holomorphic modular forms of weight $k$ for the congruence subgroup $\Gamma_0(2)$, which have poles exclusively at the cusp $\infty$. Using the structure of canonical bases and the explicit duality between the coefficients of spaces of weakly holomorphic modular forms, which have a pole only at $\infty$, we prove a general family of congruences modulo primes $p \ge 5$ and derive several corollaries as examples of the theorem's application. \\
		\nolinebreak
		\textbf{Keywords:} Modular form, Hecke eigenform, Non-ordinary prime, Weakly holomorphic modular form, Coefficient duality. \\
		\textbf{Mathematics Subject Classification:} 11F33, 11F11.
	\end{abstract}
	
	\section{Introduction}
	
	The arithmetic of the coefficients of modular forms occupies an important place in modern number theory. This is particularly evident in the study of Ramanujan's tau-function $\tau(n)$, which is the coefficient function of the unique normalized cusp form of weight 12 with respect to $SL(2,\mathbb{Z})$:
	\[
	\Delta(z)=q\prod_{m\ge 1}(1-q^m)^{24} = \sum_{n\ge 1}\tau(n)q^n, \qquad q=e^{2\pi i z}.
	\]
	In his work, Lehmer \cite{Lehmer1947} formulated the famous conjecture on the non-vanishing of $\tau(n)$, he conjectured that $\tau(n)\neq 0$ for all $n\ge 1$. This problem remains open to this day, despite significant progress in related questions concerning the distribution and divisibility of modular form coefficients.
	
	Not much is known about the distribution of non-ordinary primes. Suppose that $f(z)=\sum_{n=1}^{\infty} a_f(n)q^n \in S_k$ is a normalized Hecke eigenform for $\mathrm{SL}_2(\mathbb{Z})$. Following Serre \cite{Serre1973}, we say that $f$ is non-ordinary at a prime $p$ if $Na_f(p) \equiv 0 \pmod p$, where, for an algebraic number $\alpha$, we have defined $N\alpha = N_{\mathbb{Q}(\alpha)/\mathbb{Q}}(\alpha)$. Such primes are of great importance in the theory of $p$-adic $L$-functions and congruences of eigenforms.
	
	There are relevant works concerning non-ordinary primes and weight divisibility. Jin, Ma, and Ono \cite{JinMaOno2016} proved that for any finite set of primes $S$, there exist normalized Hecke eigenforms of level $1$ that are non-ordinary at all primes in $S$. Furthermore, Ma \cite{Ma2025} established more precise results regarding $\Delta$: if $(p-1)\mid (k-12)$ for a prime $p \ge 5$, then at least one of the following is true: $f$ is non-ordinary at $p$, or $f \equiv \Delta \pmod p$. 
	
	Weakly holomorphic modular forms are actively used to prove such statements. Duke and Jenkins \cite{DukeJenkins2008} discovered an explicit duality between the coefficients of canonical bases for weights $k$ and $2-k$. This idea was later developed for genus zero levels: Garthwaite and Jenkins thoroughly investigated levels 2 and 3, constructing analogous bases and proving a series of congruences for their coefficients \cite{JenkinsGarthwaite2013}. In the works of Jenkins and Haddock \cite{JenkinsHaddock2014}, these methods were used for level 4, and subsequent research examined levels 6, 8, 10, and 12 \cite{Moss2014}. 
	
	In this paper, we investigate weakly holomorphic modular forms of level $2$ and their arithmetic properties modulo primes. We denote by $M_k^\#(2)$ the space of weakly holomorphic modular forms of weight $k$ for $\Gamma_0(2)$, which are holomorphic on the upper half-plane $\mathbb{H}$ and can have poles only at the cusp $\infty$.
	
	The main result of this paper is the following theorem:
	
	\begin{theorem}
		\label{theo}
		Let \(p \ge 5\) be a prime, and suppose that
		\[
		f(z)=\sum_{n\gg-\infty}^{\infty} a_f(n)q^n \in M_k^{\#}(2)\cap \mathcal{O}_L[[q]],
		\]
		where \(k\in 2\mathbb{Z}\) and \(\mathcal{O}_L\) is the ring of algebraic integers of a number field \(L\). Suppose $t \ge 2$ is an even integer and \(a\ge 0\) is an integer for which \(k-2\le tp^a\). If \(\operatorname{ord}_\infty(f)>-p^a\) and \((p-1)\mid (k-t-2)\), then for every integer \(b\ge a\) and every \(m\ge -l + 1 \), where $t = -4l - k'$ and $k' \in \{0, 2\} $, we have
		\[
		a_f(mp^b)-\sum_{r = l}^{0} a_{f^{(2)}_{t+2,r}}(m)a_{f}(-rp^b) \equiv 0 \pmod p,
		\]
		where $a_{f^{(2)}_{t+2,r}}(m)$ are the Fourier coefficients of the canonical basis elements for $M_{t+2}^{\#}(2)$.
	\end{theorem}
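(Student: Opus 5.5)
We adapt the argument of Jin--Ma--Ono/Ma for level $1$ to the genus-zero group $\Gamma_0(2)$, replacing the level-one canonical basis by the Garthwaite--Jenkins basis $f^{(2)}_{k,m}$ of $M_k^{\#}(2)$. Recall that this basis has the shape $f^{(2)}_{k,m}=q^{-m}+O(q^{l'+1})$, where $k=-4l'-k'$, and is built as (a fixed form of weight $k$) $\times$ (a polynomial in the Hauptmodul $\psi$ of $\Gamma_0(2)$). Set $g(z)=\sum_{n} a_f(np^b)q^n$. The goal is to realize $g$ modulo $p$ as a weakly holomorphic form of weight $t+2$ on $\Gamma_0(2)$ with poles only at $\infty$. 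Once this is done, the congruence follows from the uniqueness/expansion property of the canonical basis: a form in $M_{t+2}^{\#}(2)$ is determined by its principal part together with its coefficients up to $q^{l}$ ($l\le 0$ for $t\ge 2$). Hence
\[
g\equiv \sum_{r=l}^{0} a_f(-rp^b)\, f^{(2)}_{t+2,r} \pmod p ,
\]
and comparing coefficients of $q^m$ for $m\ge -l+1$ gives the claim. Here $\operatorname{ord}_\infty f>-p^a$ ensures that $g$ has no pole beyond the terms $r$ with $-r p^b$ in the range of $f$ (indeed $a_f(-rp^b)=0$ unless $rp^b<p^a$, so only $r\le 0$ survive). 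The range $r=l,\dots,0$ matches the canonical basis indexing in weight $t+2$.

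The key steps, in order, are as follows.

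\textbf{Step 1.} Apply the operator $U_p^b$. Modulo $p$, $f\mid U_p$ is congruent to $f\mid T_p$ up to the term $p^{k-1}f\mid V_p$. This term vanishes mod $p$ when $k\ge 2$. For negative weights, instead use the Duke--Jenkins-type duality, or multiply by a suitable power of the level-$2$ form $E_{p-1}\equiv 1\pmod p$ to move into positive weight first.

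\textbf{Step 2.} Use the standard filtration argument: for $b\ge a$ large enough, $f\mid U_p^b$ is congruent mod $p$ to a form in $M^{\#}_{w}(2)$ with $w\equiv k\pmod{p-1}$. Its pole order at $\infty$ is at most $\lceil -\operatorname{ord}_\infty f/p^b\rceil$, so the pole is eliminated or bounded. The hypothesis $k-2\le tp^a$ guarantees that after $a$ applications the weight filtration (via Serre's/Tate's theta-cycle bound $w(f\mid U_p)\le p+(w(f)-1)/p$) has dropped to at most $t+2$.

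\textbf{Step 3.} Since $(p-1)\mid (k-t-2)$, the weights are congruent modulo $p-1$. Multiplying by powers of $E_{p-1}$ therefore puts $g$ in exactly weight $t+2$ modulo $p$. We then need holomorphy at the cusp $0$, which must be checked via the Fricke involution $W_2$.

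The main obstacle is Step 2 at level $2$, where the filtration theory must be controlled at both cusps. We must ensure that $f\mid U_p^b$ acquires no pole at the cusp $0$, i.e.\ that it stays in $M^{\#}(2)$. We would handle this by writing $f=F\cdot P(\psi)$, with $F$ holomorphic and $\psi$ the Hauptmodul, and using that $U_p$ commutes with $W_2$ up to a scalar for $p\nmid 2$, together with integrality of coefficients in $\mathcal{O}_L$. If that fails, we would instead use the generating-function (duality) identity $\sum_m f^{(2)}_{k,m}(\tau)q^m$, which expresses $a_f(mp^b)$ directly through dual coefficients, and then reduce the weight-$(2-k)$ side modulo $p$.
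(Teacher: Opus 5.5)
\textbf{There is a genuine gap: the decisive Step 2 is asserted, not proved.}

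Your reduction is correct. The theorem is equivalent to
$f\mid U_p^b\equiv\sum_{r=l}^{0}a_f(-rp^b)\,f^{(2)}_{t+2,r}\pmod p$,
that is, to $f\mid U_p^b$ being congruent to a holomorphic form in $M_{t+2}(\Gamma_0(2))$. (Minor slip: an element of $M^{\#}_{t+2}(2)$ is determined by its coefficients of $q^j$ for $j\le -l$, not $j\le l$.)

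Step 2 carries all the content, and the justification you give for it does not work.

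\emph{The pole of $f$.} The bound $w(F\mid U_p)\le p+(w(F)-1)/p$ applies to holomorphic mod $p$ forms. Here $f$ has a pole at $\infty$ and $k$ may be negative, so $w(f)$ is undefined. To handle the pole you would need to:
\begin{enumerate}
\item apply the bound to the cusp form $fS_4^{p^a}\in M_{k+4p^a}(\Gamma_0(2))$, which is holomorphic since $\operatorname{ord}_\infty f>-p^a$ and $S_4$ vanishes only at $\infty$;
\item use $(fS_4^{p^a})\mid U_p^a\equiv S_4\,(f\mid U_p^a)\pmod p$;
\item prove that dividing by $\overline{S_4}$ lowers the filtration by exactly $4$.
\end{enumerate}
The third point requires Katz-type theory of mod $p$ forms on $\Gamma_0(2)$: the $U_p$ filtration bound at level $2$, liftability, and control at the cusp $0$. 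This is precisely the ``main obstacle'' you leave open with ``if that fails\ldots''.

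\emph{The filtration bound alone does not reach $t+2$.} Even for holomorphic $f$, iterating the bound only gives $w(f\mid U_p^a)\le p+1+(k-p-1)/p^a$, which in general exceeds $t+2$. For example, $t=2$, $k=p+3$, $a=1$ gives only $w\le p+1$. The missing argument is the following. Since $w<t+p+1$ (this is where $k-2\le tp^a$ enters), $w\equiv t+2\pmod{p-1}$, and $t>0$, it follows that $w\le t+2$.

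So what you have is a plausible but heavy strategy, not a proof.

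\textbf{How the paper avoids all of this.} The paper needs no filtration theory; your closing sentence points toward its route without carrying it out.
\begin{enumerate}
\item Take the dual form $g_{-t,m}\in M^{\#}_{-t}(2)$. It is a multiple of $\psi$, hence vanishes at the cusp $0$. By the duality lemma, its coefficient of $q^r$ for $l\le r\le 0$ is $-a_{f^{(2)}_{t+2,r}}(m)$.
\item Set $\mathcal{G}=g_{-t,m}^{p^b}E_{p-1}^{c}\in M^{\#}_{2-k}(2)$. The hypotheses $k-2\le tp^a$ and $(p-1)\mid(k-t-2)$ are used only to make $c$ a non-negative integer.
\item The product $h=\mathcal{G}f\in M_2^{\#}(2)$ vanishes at $0$, so its constant term is zero by the residue theorem.
\item Reduce modulo $p$ using $E_{p-1}\equiv 1$ and $g^{p^b}\equiv g(p^bz)$. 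Since $\operatorname{ord}_\infty f>-p^b$, the $O(q^{p^b})$ tail contributes nothing.
\end{enumerate}
The constant term is then exactly $a_f(mp^b)-\sum_{r=l}^{0}a_{f^{(2)}_{t+2,r}}(m)\,a_f(-rp^b)$, which gives the theorem.
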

	
	Theorem \ref{theo}, allows us to deduce more about non-ordinary primes. Let $S_4(z)=\frac{E_4(z)-E_4(2z)}{240}$ be the weight 4 form for $\Gamma_0(2)$. We establish the following corollaries.
	
	\begin{corollary}
		\label{cor1}
		Under the assumptions of Theorem \ref{theo}, if $t=2$ ($l=-1$, $k'=2$), then for every integer \(b\ge a\) and every integer $m \ge 1$, we have
		\[
		a_f(mp^b)-\bigl( \sigma_{3}(m)-\sigma_{3}\left(\frac{m}{2}\right)\bigr)a_f(p^b) -240\sigma_{3}\left(\frac{m}{2}\right)a_f(0) \equiv 0 \pmod p,
		\]
		where $\sigma_{3}\bigl(\frac{m}{2}\bigr) = 0$ if $m$ is odd.
	\end{corollary}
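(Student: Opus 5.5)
We apply Theorem \ref{theo} with $t=2$. Writing $t=-4l-k'$ with $k'\in\{0,2\}$ forces $l=-1$ and $k'=2$. The sum $\sum_{r=l}^{0}$ then has exactly two terms, $r=-1$ and $r=0$. So for $b\ge a$ and $m\ge -l+1=2$ the theorem gives
\[
a_f(mp^b)-a_{f^{(2)}_{4,-1}}(m)\,a_f(p^b)-a_{f^{(2)}_{4,0}}(m)\,a_f(0)\equiv 0 \pmod p .
\]
The proof therefore comes down to identifying the two canonical basis elements of $M_4^{\#}(2)$ with index $r\in\{-1,0\}$. We must also check the boundary case $m=1$ separately, since the theorem only covers $m\ge 2$.

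\textbf{Identifying the basis elements.} By the normalization of the canonical basis, $f^{(2)}_{4,r}=q^{-r}+O(q^{-l+1})=q^{-r}+O(q^{2})$. Hence $f^{(2)}_{4,-1}=q+O(q^2)$ and $f^{(2)}_{4,0}=1+O(q^2)$. The term $O(q^{2})$ means the coefficient of $q$ in $f^{(2)}_{4,0}$ vanishes.

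Both forms have nonnegative order at $\infty$, so they lie in the holomorphic space $M_4(\Gamma_0(2))$. This space is $2$-dimensional, spanned by $E_4(z)$ and $E_4(2z)$. The elements of the canonical basis are unique, because the difference of two candidates would be a holomorphic form of weight $4$ vanishing to order at least $2$ at $\infty$. By the valence formula for $\Gamma_0(2)$ (index $3$, so total order $4\cdot 3/12=1$), such a form is zero. It therefore suffices to exhibit forms in $M_4(\Gamma_0(2))$ with the required leading terms.

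\textbf{Matching explicit forms.} From $E_4(z)=1+240\sum_{n\ge1}\sigma_3(n)q^n$ we get
\[
S_4(z)=\frac{E_4(z)-E_4(2z)}{240}=\sum_{m\ge1}\bigl(\sigma_3(m)-\sigma_3(\tfrac m2)\bigr)q^m=q+O(q^2).
\]
Hence $f^{(2)}_{4,-1}=S_4$. Likewise
\[
E_4(2z)=1+240\sum_{m\ge1}\sigma_3(\tfrac m2)q^m=1+240q^2+\cdots
\]
has zero coefficient of $q$, so $f^{(2)}_{4,0}=E_4(2z)$. Substituting $a_{f^{(2)}_{4,-1}}(m)=\sigma_3(m)-\sigma_3(m/2)$ and $a_{f^{(2)}_{4,0}}(m)=240\sigma_3(m/2)$ into the congruence above yields the stated congruence for all $m\ge 2$.

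\textbf{The case $m=1$.} Here $\sigma_3(1)-\sigma_3(1/2)=1$ and $\sigma_3(1/2)=0$. The left-hand side becomes $a_f(p^b)-a_f(p^b)-0=0$, so the congruence holds trivially.

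\textbf{Main obstacle.} The only delicate point is matching the canonical normalization ($q^{-r}+O(q^{-l+1})$) to the explicit forms, in particular that $f^{(2)}_{4,0}$ is $E_4(2z)$ rather than $E_4(z)$. The dimension count and valence-formula uniqueness argument settle this.
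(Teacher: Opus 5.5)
Your proof is correct and follows the paper's argument: specialize Theorem \ref{theo} to $t=2$, identify $f^{(2)}_{4,-1}=S_4$ and $f^{(2)}_{4,0}=E_4(2z)$, and read off their coefficients. Your valence-formula uniqueness check and your separate treatment of $m=1$ are additions rather than a different route: the theorem only requires $m\ge -l+1=2$, and the paper passes over the case $m=1$ silently.
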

	
	\begin{corollary}
		\label{cor2}
		Let \(p \ge 5\) be a prime, and let $f \in S_k(2)$ be a normalized Hecke eigenform. If $(p-1)\mid (k-4)$, at least one of the following is true.
		\begin{enumerate}
			\item $f$ is non-ordinary at $p$.
			\item $f \equiv S_4 \pmod p$.
		\end{enumerate}
	\end{corollary}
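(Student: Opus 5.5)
The plan is to specialize Corollary \ref{cor1} to $f$ itself, and then use the multiplicativity of Hecke eigenvalues to propagate the resulting congruence from the coefficients $a_f(mp^b)$ down to all coefficients $a_f(m)$.

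First I check the hypotheses of Theorem \ref{theo} with $t=2$. Since $f$ is a normalized Hecke eigenform, its coefficients generate a number field $L$ and are algebraic integers, so $f\in M_k^{\#}(2)\cap\mathcal{O}_L[[q]]$. Since $k$ is fixed, I choose $a\ge 0$ with $2p^a\ge k-2$. As $f$ is a cusp form, $\operatorname{ord}_\infty(f)\ge 1>-p^a$. The divisibility hypothesis $(p-1)\mid(k-t-2)=(k-4)$ is exactly our assumption. Corollary \ref{cor1} therefore applies, and since $a_f(0)=0$ it gives, for all $b\ge a$ and $m\ge 1$,
\[
a_f(mp^b)\equiv \bigl(\sigma_3(m)-\sigma_3\left(\tfrac{m}{2}\right)\bigr)a_f(p^b) \pmod p .
\]
The factor $\sigma_3(m)-\sigma_3(m/2)$ is precisely the $m$-th coefficient of $S_4=\frac{E_4(z)-E_4(2z)}{240}$, which is itself a normalized Hecke eigenform on $\Gamma_0(2)$ with multiplicative coefficients.

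Now suppose $f$ is ordinary at $p$, so $p\nmid Na_f(p)$. Then $a_f(p)$ lies in no prime $\wp$ of $\mathcal{O}_L$ above $p$. I fix such a $\wp$ and work modulo $\wp$.
\begin{itemize}
\item Since $p\nmid 2$, the Hecke recursion reads $a_f(p^{b+1})=a_f(p)a_f(p^b)-p^{k-1}a_f(p^{b-1})$. Reducing modulo $\wp$ gives $a_f(p^b)\equiv a_f(p)^b$, which is a unit modulo $\wp$.
\item Taking $m=p$ in the displayed congruence, and using $\sigma_3(p)=1+p^3\equiv 1$ and $\sigma_3(p/2)=0$, gives $a_f(p)^{b+1}\equiv a_f(p)^b$. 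Dividing by the unit $a_f(p)^b$ yields $a_f(p)\equiv 1 \pmod\wp$, and hence $a_f(p^c)\equiv 1\equiv \sigma_3(p^c)$ for all $c\ge 0$.
\item For $(n,p)=1$, multiplicativity gives $a_f(np^b)=a_f(n)a_f(p^b)$. The displayed congruence with $m=n$ then reads $a_f(n)a_f(p)^b\equiv (\sigma_3(n)-\sigma_3(n/2))a_f(p)^b$. Cancelling the unit gives $a_f(n)\equiv a_{S_4}(n)$.
\end{itemize}

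Finally, any $m\ge1$ can be written as $m=np^c$ with $(n,p)=1$. Multiplicativity of both $f$ and $S_4$ then gives $a_f(m)=a_f(n)a_f(p^c)\equiv a_{S_4}(n)a_{S_4}(p^c)=a_{S_4}(m)$. Hence $f\equiv S_4 \pmod{\wp}$ for every $\wp\mid p$, which is the meaning of $f\equiv S_4\pmod p$ in the statement.

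The main delicacy is bookkeeping rather than a genuine obstacle. The congruence of Corollary \ref{cor1} is stated modulo $p\mathcal{O}_L$, and ordinarity must be used to make $a_f(p)$ a unit at each prime above $p$ before cancelling. The other point to watch is that the congruence is only available at $mp^b$ with $b\ge a$. This is why the argument passes through the unit $a_f(p)^b$ and derives $a_f(p)\equiv1$ from the case $m=p$, rather than applying Corollary \ref{cor1} directly with $b=0$.
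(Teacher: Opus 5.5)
Your proof is correct and follows the same route as the paper: specialize Corollary~\ref{cor1} using $a_f(0)=0$, take $m=p$ together with the Hecke recursion to force $a_f(p)\equiv\sigma_3(p)\equiv 1$, then take $m$ prime to $p$ and finish by multiplicativity. The one difference favours you. The paper silently uses $b=1$, which Theorem~\ref{theo} only allows when $k-2\le 2p$, whereas you work with an admissible $b\ge a$ and cancel the unit $a_f(p)^b$. Since ordinarity makes $a_f(p)$ a unit in $\mathcal{O}_L/p\mathcal{O}_L$, the same cancellations also give the congruence modulo $p\mathcal{O}_L$, not just modulo each $\wp$.
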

	
	\begin{corollary}
		\label{cor3}
		Let \(p \ge 5\) be a prime, and let $f \in S_k(2)$ be a normalized Hecke eigenform. If $(p-1)\mid (k-4)$ and $f$ is ordinary at $p$, then
		\[
		a_f(p) \equiv 1 \pmod p.
		\]
	\end{corollary}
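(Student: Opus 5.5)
The plan is to deduce this directly from Corollary~\ref{cor2}, so no new use of Theorem~\ref{theo} is required. Let $f\in S_k(2)$ be a normalized Hecke eigenform with $(p-1)\mid(k-4)$, and suppose that $f$ is ordinary at $p$. Then alternative~1 of Corollary~\ref{cor2} fails, so alternative~2 must hold: $f\equiv S_4 \pmod p$. This congruence is coefficientwise. The coefficients $a_f(n)$ lie in the ring of integers $\mathcal{O}_L$ of the Hecke field $L$, and the congruence is read modulo $p\mathcal{O}_L$ (or modulo a prime $\mathfrak{p}\mid p$, consistent with how Corollary~\ref{cor2} is proved). In particular it holds for the coefficient of $q^p$:
\[
a_f(p)\equiv a_{S_4}(p)\pmod p .
\]

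Next I compute the Fourier expansion of $S_4$. Since
\[
E_4(z)=1+240\sum_{n\ge1}\sigma_3(n)q^n,
\]
we get
\[
S_4(z)=\frac{E_4(z)-E_4(2z)}{240}=\sum_{n\ge1}\Bigl(\sigma_3(n)-\sigma_3\bigl(\tfrac n2\bigr)\Bigr)q^n,
\]
with the convention that $\sigma_3(n/2)=0$ for odd $n$, exactly as in Corollary~\ref{cor1}. Because $p\ge5$ is odd,
\[
a_{S_4}(p)=\sigma_3(p)=1+p^3\equiv 1\pmod p .
\]
Combining this with the previous congruence gives $a_f(p)\equiv 1\pmod p$.

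The only point needing care is what ``$\equiv \pmod p$'' means when $a_f(p)$ is an algebraic integer rather than a rational integer. I would state the conclusion in whatever form Corollary~\ref{cor2} provides: modulo $p\mathcal{O}_L$, or modulo a prime $\mathfrak{p}$ above $p$. I would also check that this is compatible with the norm-based definition of ordinarity. If Corollary~\ref{cor2} is only established modulo some $\mathfrak{p}\mid p$, the conclusion should be read modulo $\mathfrak{p}$. This is consistent with ordinarity: $a_f(p)\equiv1\not\equiv0$ modulo $\mathfrak{p}$ is compatible with $Na_f(p)\not\equiv0\pmod p$. I do not expect any further obstacle, since all the substantive work is contained in Corollary~\ref{cor2}.
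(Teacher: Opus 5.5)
Your proposal is correct and follows the paper's argument exactly: ordinarity rules out the first alternative of Corollary~\ref{cor2}, so $f\equiv S_4 \pmod p$, and comparing $q^p$-coefficients gives $a_f(p)\equiv \sigma_3(p)=1+p^3\equiv 1 \pmod p$. Your extra remarks about reading the congruence in $\mathcal{O}_L$ are harmless, because the proof of Corollary~\ref{cor2} already works modulo $p\mathcal{O}_L$, using that an ordinary $a_f(p)$ is a unit modulo every $\mathfrak{p}\mid p$.
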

	
	The paper is organized as follows. In Section \ref{sec2}, we provide the necessary background on weakly holomorphic forms and Eisenstein series. In Section \ref{sec3}, we outline the construction of the canonical basis for $M_k^\#(2)$ and prove the duality property. Section \ref{sec4} is devoted to the proof of Theorem \ref{theo} and its corollaries.
	
	\section{Preliminaries}
	\label{sec2}
	
	Let $\mathbb{H}$ be the upper half-plane. The set of cusps for $\Gamma_0(2)$, denoted by $\mathbb{P}^1(\mathbb{Q})/\Gamma_0(2)$, consists of two points, which can be represented by $0$ and $\infty$.
	
	We recall the definition of weakly holomorphic modular forms:
	\begin{definition}[\cite{Ono2004}]
		A function $f: \mathbb{H} \rightarrow \mathbb{C}$ is called a \textbf{weakly holomorphic modular form} of weight $k$ for $\Gamma_0(N)$ if it satisfies the weight $k$ slash operator invariance for all matrices in $\Gamma_0(N)$, is holomorphic on $\mathbb{H}$, and has poles (if any) supported only at the cusps. The space of such forms is denoted by $M_k^!(N)$. The forms that can have a pole exclusively at the cusp $\infty$ are called \textbf{totally weakly holomorphic forms} and their space is denoted by $M_k^\#(N)$.
	\end{definition}
	
	We will need the following property for weight 2:
	
	\begin{proposition}
		\label{prop:residue}
		If $h \in M_2^\#(2)$ and $h(0) = 0$, then its constant term $a_h(0)$ vanishes.
	\end{proposition}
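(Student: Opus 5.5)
The plan is to apply the residue theorem to the differential attached to $h$ on the compact modular curve $X_0(2)=\Gamma_0(2)\backslash(\mathbb{H}\cup\mathbb{P}^1(\mathbb{Q}))$. I read the hypothesis $h(0)=0$ as saying that $h$ is holomorphic at the cusp $0$ and its expansion there has vanishing constant term. Since $h$ has weight $2$, the expression $\omega=h(z)\,dz$ is $\Gamma_0(2)$-invariant. It therefore descends to a meromorphic differential on $X_0(2)$, and the sum of its residues must be zero.

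\textbf{Interior points.} For $z_0\in\mathbb{H}$ that is not elliptic, the projection $\mathbb{H}\to X_0(2)$ is a local biholomorphism, so $\omega$ is holomorphic there because $h$ is. $\Gamma_0(2)$ has one elliptic point, of order $2$, at $(1+i)/2$. There the local coordinate is $w=\bigl((z-z_0)/(z-\bar z_0)\bigr)^2$, and $dz$ is a multiple of $w^{-1/2}\,dw$. By the stabilizer invariance, $h$ expands in odd powers of $u=(z-z_0)/(z-\bar z_0)$ times $(z-\bar z_0)^{-2}$, which gives a holomorphic differential in $w$. In particular the residue there is $0$. I expect this to be the step needing the most care: checking the parity of the expansion under the order-$2$ stabilizer. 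It is the standard fact that weight-$2$ forms holomorphic on $\mathbb{H}$ give differentials holomorphic at elliptic points (Shimura, Prop.~2.16). I would cite it rather than recompute.

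\textbf{Cusps.} At $\infty$ (width $1$) the local parameter is $q=e^{2\pi i z}$, so $dz=\frac{dq}{2\pi i\,q}$ and
\[
\omega=\frac{1}{2\pi i}\sum_n a_h(n)q^{n-1}\,dq .
\]
Hence $\operatorname{Res}_\infty\omega=\frac{a_h(0)}{2\pi i}$; the poles of $h$ at $\infty$ contribute no residue. At the cusp $0$ (width $2$) I use the Fricke or Atkin–Lehner involution, or $S=\begin{pmatrix}0&-1\\1&0\end{pmatrix}$. Under it, $\omega$ becomes $(h|_2S)(z)\,dz$, where $h|_2S$ has an expansion in $q_2=e^{\pi i z}$ with no negative powers. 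The residue at $0$ is therefore a nonzero constant multiple of the constant term of $h|_2S$, which is zero by the hypothesis $h(0)=0$. If one prefers not to pass through the differential at $0$, it is enough that $\omega$ is holomorphic at $0$ there. Indeed, $dz=\frac{dq_2}{\pi i q_2}$ in the transformed coordinate, and the vanishing constant term cancels the simple pole.

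\textbf{Conclusion.} The residue theorem on the compact Riemann surface $X_0(2)$ gives
\[
0=\sum_{P}\operatorname{Res}_P\omega=\frac{a_h(0)}{2\pi i}+0 .
\]
Hence $a_h(0)=0$.
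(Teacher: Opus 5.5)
Your proposal is correct and follows the same route as the paper: you apply the residue theorem on $X_0(2)$ to $\omega = h(z)\,dz$, and the residue at $\infty$ is $a_h(0)/(2\pi i)$. Your extra checks at the elliptic point $(1+i)/2$ and at the width-$2$ cusp $0$ (via $h|_2 S$) spell out what the paper's one-line claim, that ``the only possible pole is at $\infty$,'' leaves implicit.
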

	
		\begin{proof}
		Let $\omega = h(z)\,dz$. Since $h$ has weight 2, $\omega$ is invariant under $\Gamma_0(2)$ and defines a meromorphic differential on the compact Riemann surface $X_0(2) = \Gamma_0(2)\backslash(\mathbb{H} \cup \{0, \infty\})$. By the residue theorem on compact Riemann surfaces, we have $\sum \operatorname{Res}(\omega) = 0$. 
		
		Because $h \in M_2^\#(2)$ is holomorphic on $\mathbb{H}$ and vanishes at the cusp 0, the only possible pole is at $\infty$. By taking the local coordinate $q = e^{2\pi i z}$, we obtain:
		\[
		\omega = h(z)\,dz = \frac{1}{2\pi i} \sum_{n \gg -\infty}^{\infty} a_h(n) q^{n-1}\,dq.
		\]
		The residue at infinity is $\operatorname{Res}_\infty(\omega) = \frac{1}{2\pi i} a_h(0)$. Since the sum of all residues is zero, we get $a_h(0)=0$.
	\end{proof}
	
	We also recall a well-known congruence for Eisenstein series.
	
	\begin{proposition}
		\label{prop:eisenstein}
		If $p\ge 5$ is a prime, then we have the congruence:
		\[
		E_{p-1}(z)\equiv 1 \pmod p.
		\]
	\end{proposition}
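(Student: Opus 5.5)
The plan is to use the von Staudt–Clausen theorem together with the explicit $q$-expansion of $E_{p-1}$, since the statement is just the classical congruence $E_{p-1}\equiv 1 \pmod p$. Normalize as usual:
\[
E_{p-1}(z)=1-\frac{2(p-1)}{B_{p-1}}\sum_{n\ge 1}\sigma_{p-2}(n)q^n .
\]
It then suffices to show that every coefficient $\frac{2(p-1)}{B_{p-1}}\sigma_{p-2}(n)$ is a rational number with $p$-adic valuation at least $1$. The divisor sums $\sigma_{p-2}(n)$ are integers, so everything reduces to showing that the single rational number $\frac{2(p-1)}{B_{p-1}}$ is divisible by $p$. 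Here $p\ge 5$ guarantees that $p-1\ge 4$ is even, so $E_{p-1}$ is a genuine holomorphic modular form for $\mathrm{SL}_2(\mathbb{Z})$.

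The key step is the $p$-adic valuation of $B_{p-1}$. By the von Staudt–Clausen theorem,
\[
B_{p-1}+\sum_{(q-1)\mid (p-1)}\frac1q\in\mathbb{Z},
\]
where $q$ runs over primes. The prime $q=p$ occurs in this sum, and every other term has denominator prime to $p$. Hence $pB_{p-1}\equiv -1 \pmod p$, so $v_p(B_{p-1})=-1$ and $pB_{p-1}$ is a $p$-adic unit.

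With this in hand, write
\[
\frac{2(p-1)}{B_{p-1}}=p\cdot\frac{2(p-1)}{pB_{p-1}} .
\]
The second factor is $p$-integral because its denominator $pB_{p-1}$ is a $p$-adic unit, so the whole quantity has valuation $v_p\ge 1$. Every non-constant coefficient of $E_{p-1}$ is therefore $p$-integral and divisible by $p$, while the constant term is $1$. This gives $E_{p-1}\equiv 1\pmod p$ coefficientwise in $\mathbb{Z}_{(p)}[[q]]$.

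The only nontrivial input is von Staudt–Clausen, which I will simply cite; the rest is bookkeeping. For the level-$2$ application, the same congruence also holds for $E_{p-1}(2z)$, since that series is obtained by the substitution $q\mapsto q^2$.
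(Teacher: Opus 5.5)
Your proposal is correct and follows the same route as the paper: expand $E_{p-1}$ as a $q$-series, then use von Staudt--Clausen to show that $2(p-1)/B_{p-1}$ is divisible by $p$. Your version is slightly more careful than the paper's one-line argument, because you make explicit that $v_p(B_{p-1})=-1$, so $pB_{p-1}$ is a $p$-adic unit; this is exactly what justifies the paper's claim that $1/B_{p-1}\equiv 0 \pmod p$.
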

	
	\begin{proof}
		For prime $p\ge 5$, the normalized Eisenstein series is given by
		\[
		E_{p-1}(z)=1-\frac{2(p-1)}{B_{p-1}}\sum_{n=1}^{\infty}\sigma_{p-2}(n)q^n.
		\]
		Applying the von Staudt-Clausen theorem, the denominator of the Bernoulli number $B_{p-1}$ is divisible by $p$. Thus $\frac{1}{B_{p-1}}\equiv 0 \pmod p$. The conclusion follows.
	\end{proof}
	
	Next, we refer to a property for Hecke eigenforms, which will be used in the proof.
	
	\begin{proposition}
		\label{prop:hecke_mult}
		If $f \in S_k(2)$ is a normalized Hecke eigenform, then for any prime $p$ and $m \ge 1$:
		\[
		a_f(p)a_f(p^m) = a_f(p^{m+1}) + p^{k-1}a_f(p^{m-1}) \equiv a_f(p^{m+1}) \pmod p.
		\]
		By induction, it follows that $a_f(p^m) \equiv a_f(p)^m \pmod p$.
	\end{proposition}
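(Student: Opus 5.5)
This follows from the Hecke relation at the prime $p$ for forms on $\Gamma_0(2)$. The prime $p$ is allowed to be $2$, since $f\in S_k(2)$ and $T_2$ is the $U_2$ operator there. We therefore treat the two cases separately.

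For $p\nmid 2$, i.e.\ $p$ odd, we use that a normalized eigenform of the Hecke operator $T_p$ on $S_k(\Gamma_0(2))$ has multiplicative coefficients. These satisfy the standard recursion
\[
a_f(p)a_f(p^m)=a_f(p^{m+1})+p^{k-1}a_f(p^{m-1}),\qquad m\ge 1.
\]
We derive this recursion from the action of $T_p$ on $q$-expansions,
\[
a_{T_pf}(n)=a_f(pn)+p^{k-1}a_f(n/p),
\]
where $a_f(n/p)=0$ if $p\nmid n$. Evaluating at $n=p^m$ and using $T_pf=a_f(p)f$ gives the identity. Since $k\ge 2$, the factor $p^{k-1}$ is divisible by $p$. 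The coefficients $a_f(n)$ lie in the ring of integers $\mathcal O_L$ of the Hecke field, because the Hecke algebra preserves an integral structure and an eigenvalue is integral. Hence the second term vanishes modulo $p\mathcal O_L$, which gives the congruence.

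For $p=2$, the operator $T_2=U_2$ gives $a_f(2)a_f(2^m)=a_f(2^{m+1})$ exactly. The identity then holds with the $p^{k-1}$ term absent, and the congruence is immediate. If the paper intends only $p\ge 5$, this case is not even needed. Newforms at level $2$ with $p=2$ could additionally have the oldform issue: oldforms from level 1 are not $U_2$-eigen in the same way. I would restrict to eigenforms of all $T_p$, i.e.\ newforms or $U_p$-eigen forms, as the paper implicitly does.

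For the induction, set $m=1$ as the base: $a_f(p)\equiv a_f(p)^1$ holds trivially. Assume $a_f(p^m)\equiv a_f(p)^m \pmod p$. Then
\[
a_f(p^{m+1})\equiv a_f(p)a_f(p^m)\equiv a_f(p)^{m+1} \pmod p.
\]

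The only delicate point is integrality of the coefficients, so that reduction mod $p$ makes sense. I would cite the standard fact that eigenvalues of Hecke operators on $S_k(\Gamma_0(N))$ are algebraic integers. This holds because $S_k(\Gamma_0(N))$ has a basis with integral $q$-expansions that is stable under the Hecke operators. The congruence is then taken modulo any prime of $\mathcal O_L$ above $p$, or modulo $p\mathcal O_L$.
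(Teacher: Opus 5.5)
Your proposal is correct and is essentially the argument the paper has in mind: the paper states this proposition without proof, relying on the standard recursion $a_{T_pf}(n)=a_f(pn)+p^{k-1}a_f(n/p)$, the vanishing of $p^{k-1}$ modulo $p$, and induction, exactly as you do. Your separate handling of $p=2$ via $U_2$ is a correct refinement that the paper, which only uses $p\ge 5$, does not need; there the exact relation is $a_f(2^{m+1})=a_f(2)a_f(2^m)$ rather than the displayed one, but the congruence still holds. Your integrality remark is likewise a correct addition.
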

	
	\section{Canonical basis of the space $M_k^\#(2)$ and coefficient duality}
	\label{sec3}
	
	We present the construction of the canonical basis for $M_k^\#(2)$. It relies on three fundamental functions \cite{Guindy2009}:
	\[
	\psi(z)=\left(\frac{\eta(z)}{\eta(2z)}\right)^{24} = q^{-1}-24+276q+\cdots \in M_0^\#(2),
	\]
	\[
	F_2(z)=2E_2(2z)-E_2(z)\in M_2(2), \qquad S_4(z)=\frac{E_4(z)-E_4(2z)}{240}\in M_4(2).
	\]
	Here, $\psi(z)$ is a Hauptmodul for $\Gamma_0(2)$, the form $S_4(z)$ vanishes at $\infty$, and $F_2(z)$ is a normalized holomorphic form of weight $2$.
	
	Let $k \in 2\mathbb{Z}$ be written as $k=4\ell+k'$, where $k'\in\{0,2\}$ and $\ell\in\mathbb{Z}$. The space $M_k^\#(2)$ admits a unique canonical basis $\{f^{(2)}_{k,n}(z)\}_{n\ge -\ell}$, where each element satisfies
	\[
	f^{(2)}_{k,n}(z)=q^{-n} + \sum_{m \ge \ell+1} a_k(m,n)q^m.
	\]
	Each element of this basis can be expressed as $f^{(2)}_{k,n}(z)=S_4^\ell(z)\,F_{k'}(z)\,P_{n}(\psi(z))$, where $P_n(x)$ is a polynomial with integer coefficients of degree $n+\ell$.
	
	Furthermore, we define the dual family $g^{(2)}_{k,n}(z)\in M_k^\#(2)$, which is characterized by vanishing at the cusp $0$. 
	
    Such a family is also constructed recursively: starting with
	\[
	g^{(2)}_{k,-\ell+1}(z)=S_4^\ell(z)\,\psi(z)\,F_{k'}(z),
	\]
	and then multiplying again by $\psi(z)$ and subtracting the previous terms,
	to set the intermediate Fourier coefficients to zero.
	
	The Fourier expansion is given by
	\[
	g^{(2)}_{k,n}(z)=q^{-n}+\sum_{m\ge \ell} b_k(m,n)\,q^m, \qquad n\ge -\ell+1.
	\]
	
	The key property of these spaces, which allows us to build explicit congruences, is the symmetry of their coefficients.
	
	\begin{lemma}
		For all integers $m$ and $n$, we have
		\[
		a_k(m,n) = -b_{2-k}(n,m).
		\]
	\end{lemma}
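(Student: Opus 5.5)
The plan is the standard Duke–Jenkins residue argument, using Proposition \ref{prop:residue}. Fix $m$ and $n$ with $n\ge -\ell$, where $k=4\ell+k'$. Write the weight $2-k$ as $2-k=4\ell^*+k^{*}$ with $k^*\in\{0,2\}$. The claim only has content when both basis elements exist: $f^{(2)}_{k,n}$ requires $n\ge -\ell$, and $g^{(2)}_{2-k,m}$ requires $m\ge -\ell^*+1$. A short computation with the two cases $k'=0$ and $k'=2$ shows that these index ranges are compatible. In each case, $-\ell^*+1$ equals exactly the first index $\ell+1$ at which the coefficients $a_k(m,n)$ can be nonzero, and symmetrically for $b$. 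Outside these ranges both sides vanish by the shape of the Fourier expansions, and I would verify this first.

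The main step is to consider the product
\[
h(z)=f^{(2)}_{k,n}(z)\,g^{(2)}_{2-k,m}(z)\in M_2^\#(2).
\]
It is holomorphic on $\mathbb{H}$ and has poles only at $\infty$, since both factors do. I must check that $h$ vanishes at the cusp $0$. By construction $g^{(2)}_{2-k,m}$ vanishes at $0$. The factor $f^{(2)}_{k,n}=S_4^\ell F_{k'}P_n(\psi)$ is not singular at $0$ in a way that cancels this, so the product still vanishes there. More precisely, the construction of $g$ (an extra factor of $\psi$, which vanishes at $0$, since $\psi$ has its zero at the cusp $0$) is designed so that the orders at $0$ add to something positive. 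I would make this precise by computing orders at $0$: $S_4$ has order $1$ at the cusp $0$ (width $2$, valence count $\tfrac{4}{12}\cdot 3=1$ zero, located at $0$), $F_2$ is nonvanishing at $0$, and $\psi$ has a simple zero at $0$. Then $\operatorname{ord}_0 f\ge \ell$ and $\operatorname{ord}_0 g\ge \ell^*+1$. Using $\ell+\ell^*\ge 0$, which follows from $k+(2-k)=2$ (giving $\ell+\ell^*=0$ or $-1$ with the matching $k'$, $k^*$ forcing the $F_2$ factor), I expect to get total order $\ge 1$ at $0$. This bookkeeping is the step I expect to be most delicate, especially in the case $\ell+\ell^*=-1$. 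There the precise order of $g$ at $0$ coming from the recursive construction must be used, rather than the crude bound.

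Once $h(0)=0$ is established, Proposition \ref{prop:residue} gives $a_h(0)=0$. Expanding the product,
\[
h=\Bigl(q^{-n}+\sum_{j\ge \ell+1}a_k(j,n)q^j\Bigr)\Bigl(q^{-m}+\sum_{i\ge \ell^*}b_{2-k}(i,m)q^i\Bigr).
\]
The constant term comes from three sources: $q^{-n}\cdot b_{2-k}(n,m)q^{n}$, $a_k(m,n)q^m\cdot q^{-m}$, and the cross terms $a_k(j,n)b_{2-k}(-j,m)$. For a cross term to be nonzero we need $j\ge \ell+1$ and $-j\ge \ell^*$, i.e. $\ell+1\le j\le -\ell^*$. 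The index conventions (gap structure) are chosen so that this range is empty. This is the same compatibility as in the first paragraph, and I will confirm it in both parity cases. The constant term is therefore $b_{2-k}(n,m)+a_k(m,n)=0$, which is the claim.
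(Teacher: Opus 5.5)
Your proof is correct, and it takes a different route from the paper's. The paper quotes El-Guindy's generating functions for $\{f^{(2)}_{k,n}\}$ and $\{g^{(2)}_{k,n}\}$. It observes that the kernel for $g^{(2)}_{2-k,\cdot}$ with $z$ and $\tau$ interchanged is minus the kernel for $f^{(2)}_{k,\cdot}$, and then compares coefficients of the two-variable expansions. You instead use the Duke--Jenkins constant-term argument: $h=f^{(2)}_{k,n}\,g^{(2)}_{2-k,m}\in M_2^\#(2)$ vanishes at $0$, Proposition~\ref{prop:residue} kills its constant term, and the gap structure of the two expansions leaves exactly $a_k(m,n)+b_{2-k}(n,m)$.

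Your route is self-contained: it needs only Proposition~\ref{prop:residue}, which is the same tool the paper uses for Theorem~\ref{theo}. It also avoids the delicate points of the generating-function route. That route depends on an external identity and its normalization; as printed, the left-hand sides should read $f^{(2)}_{k,n}(z)$ and $g^{(2)}_{k,n}(z)$ expanded in powers of $e^{2\pi i\tau}$ for the weights to match. In addition, after swapping $z$ and $\tau$, the two expansions being compared converge in different regions. The paper's route, done carefully, gives the whole two-variable identity at once, whereas yours recovers it one coefficient at a time.

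A few claims in your order bookkeeping need fixing, although none of them affects the conclusion.

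First, $S_4=q+8q^2+\cdots$ has its unique zero (total order $4\cdot 3/12=1$) at $\infty$ and does not vanish at the cusp $0$. So your bound $\operatorname{ord}_0 f^{(2)}_{k,n}\ge \ell$ fails for $\ell>0$; for example, $f^{(2)}_{4,-1}=S_4$ is nonzero at $0$. Likewise $\operatorname{ord}_0 g^{(2)}_{2-k,m}\ge \ell^*+1$ fails for $\ell^*>0$; for example, $g^{(2)}_{4,0}=S_4\psi$ has only a simple zero at $0$. The two errors cancel in the product.

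Second, $\ell^*=-\ell$ always. The relation $4(\ell+\ell^*)=2-k'-k^*$ with $k'+k^*\in\{0,2,4\}$ forces $k'+k^*=2$, so the case $\ell+\ell^*=-1$ that you flagged as delicate never occurs. Writing $g^{(2)}_{2-k,m}=S_4^{-\ell}F_{k^*}\psi\,Q(\psi)$, you get $h=F_2\,\psi\,P_n(\psi)\,Q(\psi)$. The powers of $S_4$ cancel and $h$ visibly vanishes at $0$. More simply, $f^{(2)}_{k,n}\in M_k^\#(2)$ is holomorphic at $0$ and $g^{(2)}_{2-k,m}$ vanishes there, which is all you need.

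Third, when you list the sources of the constant term, also rule out the product $q^{-n}\cdot q^{-m}$. It contributes nothing because $n+m\ge -\ell+(\ell+1)=1$. With $\ell^*=-\ell$, your cross-term range $\ell+1\le j\le \ell$ is indeed empty.
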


	\begin{proof}
		Applying the standard identities from \cite [Theorem 1.2]{Guindy2009}, the generating function for the basis $f^{(2)}_{k,n}$ is
		\[
		\sum_{n=-\ell}^{\infty} f^{(2)}_{k,n}(\tau)\,q^n = \frac{(S_4^\ell F_{k'})(z)}{(S_4^\ell F_{k'})(\tau)} \cdot \frac{\psi(\tau)F_2(\tau)}{\psi(\tau)-\psi(z)}.
		\]
		For the dual family $g^{(2)}_{k,n}$, we have
		\[
		\sum_{n=-\ell+1}^{\infty} g^{(2)}_{k,n}(\tau)\,q^n = \frac{(S_4^\ell \psi F_{k'})(z)}{(S_4^\ell \psi F_{k'})(\tau)} \cdot \frac{\psi(\tau)F_2(\tau)}{\psi(\tau)-\psi(z)}.
		\]
		Swapping the variables $z$ and $\tau$, and replacing the weight $k$ with $2-k$, the expression transforms into $-\sum_{m\ge \ell} f^{(2)}_{2-k,m}(\tau)\,q^m$. Thus, by comparing coefficients, we obtain $a_k(m,n) = -b_{2-k}(n,m)$. The lemma is proven.
	\end{proof}
	
	\section{Proof of the main results}
	\label{sec4}
	
	\begin{proof}[Proof of Theorem \ref{theo}]
		Note that the divisibility condition $(p-1)\mid (k-t-2)$ implies the congruence:
		\[
		(k-2)-tp^b \equiv k-(t+2) \equiv 0 \pmod{p-1}.
		\]
		Since $k-2 \le tp^a \le tp^b$ for any $b \ge a$ there exists a non-negative integer $c$ satisfying:
		\[
		-tp^b + c(p-1) = 2-k.
		\]
		Applying the construction from Sect. 3, consider $g_{-t,m}(z)$ from the dual family in $M_{-t}^\#(2)$. Its $q$-expansion is given by
		\[
		g_{-t,m}(z) = q^{-m} - \sum_{r=\ell}^{0} a_{f^{(2)}_{t+2,r}}(m)q^r + \sum_{n=1}^{\infty} a_{g_{-t,m}}(n)q^n.
		\]
		Thus we establish that
		\[
		\mathcal{G}(z) = g_{-t, m}^{p^b}(z)E_{p-1}^c(z) \in M_{2-k}^\#(2).
		\]
		Since $g_{-t, m}(0) = 0$, we have $\mathcal{G}(0) = 0$. For any $f \in M_k^\#(2)$, the product $h(z) = \mathcal{G}(z)f(z)$ lies in $M_2^\#(2)$ and vanishes at the cusp $0$. Applying Proposition 2.1, we obtain that the constant term $a_h(0)$ of $h(z)$ is zero.
		
		Knowing that $E_{p-1}(z) \equiv 1 \pmod p$ from Proposition 2.2, and with Fermat's little theorem, we get
		\[
		h(z) \equiv \left( q^{-mp^b} - \sum_{r=\ell}^{0} a_{f^{(2)}_{t+2,r}}(m)q^{rp^b} + O(q^{p^b}) \right) \left( \sum_{n \gg -\infty}^{\infty} a_f(n)q^n \right) \pmod p.
		\]
		As $\operatorname{ord}_\infty(f) > -p^a \ge -p^b$, the terms in $O(q^{p^b})$ do not contribute to the constant term of the product. Therefore, the constant term must satisfy the congruence
		\[
		0 = a_h(0) \equiv a_f(mp^b) - \sum_{r=\ell}^{0} a_{f^{(2)}_{t+2,r}}(m)a_f(-rp^b) \pmod p.
		\]
		This completes the proof.
		\end{proof}
	
	\begin{proof}[Proof of Corollary \ref{cor1}]
		Take $t=2$, $\ell=-1$, and $k'=2$ in Theorem \ref{theo}. We get
		\[
		a_f(mp^b) - a_{f^{(2)}_{4,-1}}(m)a_f(p^b) - a_{f^{(2)}_{4,0}}(m)a_f(0) \equiv 0 \pmod p.
		\]
		Note that $f^{(2)}_{4,-1}(z) = S_4(z) = \sum_{n=1}^\infty ( \sigma_3(n) - \sigma_3(n/2) )q^n$, and $f^{(2)}_{4,0}(z) = E_4(2z) = 1 + 240\sum_{n=1}^\infty \sigma_3(n)q^{2n}$. Substituting these explicit coefficients, we get the conclusion.
	\end{proof}
	
	\begin{proof}[Proof of Corollary \ref{cor2}]
		For any normalized Hecke eigenform $f \in S_k(2)$, we have $a_f(0)=0$. Corollary 1.2 implies
		\begin{equation}
			\label{eq:eigen_congruence}
			a_f(mp) - \left(\sigma_3(m) - \sigma_3\left(\frac{m}{2}\right)\right)a_f(p) \equiv 0 \pmod p.
		\end{equation}
		Assuming $f$ is ordinary at $p$, i.e., $a_f(p) \not\equiv 0 \pmod{\mathfrak{p}}$ for any prime ideal $\mathfrak{p}$ above $p$, we show $f \equiv S_4 \pmod p$ in the following. 
		
		Take $m=p$ first. Since $p \ge 5$ is odd, $\sigma_3(p/2)=0$. Then \eqref{eq:eigen_congruence} turns to $a_f(p^2) - \sigma_3(p)a_f(p) \equiv 0 \pmod p$. Applying Proposition 2.3, this is equal to
		\[
		a_f(p)\left(a_f(p) - \sigma_3(p)\right) \equiv 0 \pmod p.
		\]
		Under the assumption that $f$ is ordinary at $p$, we obtain $a_f(p) \equiv \sigma_3(p) \pmod p$. By induction, $a_f(p^c) \equiv \sigma_3(p^c) \pmod p$ for every $c \in \mathbb{Z}_{+}$.
		
		For those $m$ coprime to $p$, knowing that $a_f(n)$ is a multiplicative function, \eqref{eq:eigen_congruence} tells us $a_f(p)(a_f(m) - \sigma_3(m)) \equiv 0 \pmod p$. Hence $a_f(m) \equiv \sigma_3(m) \pmod p$. With the multiplicativity of $a_f(n)$ and $\sigma_3(n)$, we obtain that $a_f(n) \equiv \sigma_3(n) - \sigma_3(n/2) \pmod p$ holds for every $n \in \mathbb{Z}_{+}$. That is, $f \equiv S_4 \pmod p$.
	\end{proof}
	
	\begin{proof}[Proof of Corollary \ref{cor3}]
		Since $f$ is ordinary at $p$, Corollary 1.3 implies $f \equiv S_4 \pmod p$. The coefficient of $q^p$ in $S_4(z)$ is $\sigma_3(p) = 1 + p^3 \equiv 1 \pmod p$. Thus we get $a_f(p) \equiv 1 \pmod p$.
	\end{proof}
	\begin{remark}
		The proof of Theorem 1 can be extended to other congruence subgroups $\Gamma_0(N)$ of genus 0. The main difficulty lies in finding two families with duality in the coefficients - the rest follows by analogy.
	\end{remark}


\begin{thebibliography}{99}
		
		\bibitem{Lehmer1947} 
		D.~H. Lehmer, \textit{The vanishing of Ramanujan's function $\tau(n)$}, Duke Math. J. \textbf{14}(2), 429--433 (1947).
		
		\bibitem{Serre1973} 
		J.-P. Serre, \textit{A Course in Arithmetic}, Graduate Texts in Mathematics, Vol. 7. Springer, New York (1973).
		
		\bibitem{JinMaOno2016} 
		S. Jin, W. Ma, K. Ono, \textit{A note on non-ordinary primes}, Proc. Amer. Math. Soc. \textbf{144}(11), 4591--4597 (2016).
		
		\bibitem{Ma2025} 
		W. Ma, \textit{Non-ordinary primes and $\Delta$ congruences for modular forms}, Ramanujan J. \textbf{68}(2), 39--47 (2025).
		
		\bibitem{DukeJenkins2008} 
		W. Duke, P. Jenkins, \textit{On the coefficients and zeros of weakly holomorphic modular forms}, Pure Appl. Math. Q. \textbf{4}(4), 1327--1340 (2008).
		
		\bibitem{JenkinsGarthwaite2013} 
		S.~A. Garthwaite, P. Jenkins, \textit{Zeros of weakly holomorphic modular forms of levels 2 and 3}, Math. Res. Lett. \textbf{20}(4), 657--674 (2013).
		
		\bibitem{JenkinsHaddock2014} 
		A. Haddock, P. Jenkins, \textit{Zeros of weakly holomorphic modular forms of level 4}, Int. J. Number Theory \textbf{10}(2), 455--470 (2014).
		
		\bibitem{Moss2014}
		V. Iba, P. Jenkins, M. Warnick, \textit{Divisibility properties of coefficients of modular functions in genus zero levels}, Integers \textbf{19}, A7 (2019).
		
		\bibitem{Ono2004} 
		K. Ono, \textit{The Web of Modularity: Arithmetic of the Coefficients of Modular Forms and $q$-series}, CBMS Regional Conference Series in Mathematics, Vol. 102. Amer. Math. Soc., Providence (2004).
		
		\bibitem{Guindy2009} 
		A. El-Guindy, \textit{Fourier expansions with modular form coefficients}, Int. J. Number Theory \textbf{5}(8), 1433--1446 (2009).
		
	\end{thebibliography}
\end{document}